\begin{document}

\title{Maximal nonassociativity via fields}
\author{Petr Lison\v{e}k\thanks{Research was supported
in part by the Natural Sciences and Engineering Research Council of Canada
(NSERC).}
\\
Department of Mathematics\\
Simon Fraser University\\
Burnaby, BC\\
Canada\ \ \ V5A 1S6\\
\ \\
{\tt plisonek@sfu.ca}}
\date{}

\def\eps{\varepsilon}
\def\F{{\mathbb F}}
\def\Q{{\mathbb Q}}
\def\C{{\mathbb C}}
\def\Z{{\mathbb Z}}
\def\Fq{{\mathbb F_q}}
\def\e{\eta}
\def\L1{L(1,z^*)}
\def\Le{L(\e,z^*)}

\def\P{\phantom{\Big|}}

\newtheorem{theorem}{Theorem}[section]
\newtheorem{lemma}[theorem]{Lemma}
\newtheorem{proposition}[theorem]{Proposition}
\newtheorem{corollary}[theorem]{Corollary}
\newtheorem{conjecture}[theorem]{Conjecture}
\theoremstyle{definition}
\newtheorem{definition}[theorem]{Definition}
\newtheorem{example}[theorem]{Example}
\newtheorem{remark}[theorem]{Remark}

\def\maqs{maximally nonassociative quasigroups}
\def\maq{maximally nonassociative quasigroup}
\def\ra{\rightarrow}

\def\sq{square}
\def\ns{non-square}

\maketitle

\begin{abstract}
We say that $(x,y,z)\in Q^3$ is an associative triple in a quasigroup $Q(*)$
if $(x*y)*z=x*(y*z)$. Let $a(Q)$ denote the number of associative triples in $Q$.
It is easy to show that $a(Q)\ge |Q|$, and we call the quasigroup 
maximally nonassociative if $a(Q)= |Q|$. It was conjectured 
that maximally nonassociative quasigroups do not exist when $|Q|>1$.
Dr\'apal and Lison\v{e}k recently refuted this conjecture
by proving the existence of maximally nonassociative quasigroups
for a certain infinite set of orders $|Q|$.
In this paper we prove the existence of maximally nonassociative quasigroups
for a much larger set of orders $|Q|$. Our main tools are finite fields
and the Weil bound on quadratic character sums.
Unlike in the previous work, our results are to a large extent constructive.
\end{abstract}

\section{Maximally nonassociative quasigroups}

A {\em quasigroup} $Q(*)$ 
is a set $Q$ with a binary operation $*$ such that
for all $a,b\in Q$ there exist unique $x,y \in Q$ such that 
$a*x = b $ and $y * a = b$. 
Hence a binary 
operation on a finite set yields a quasigroup if and only if 
its multiplication table is a Latin square.

We call a triple $(x,y,z)\in Q^3$  \emph{associative} if 
$(x*y)*z = x*(y*z)$,
and we denote the number of associative triples in $Q$ by $a(Q)$.
For each $c\in Q$ there exist $x,y$ such that $c*x=c$ and $y*c=c$.
Then $(y,c,x)$ is an associative triple and it follows
that $a(Q)\ge |Q|$.
We call $Q$ {\em maximally nonassociative} when $a(Q)= |Q|$.
The existence of such quasigroups has been investigated
for at least four decades \cite{tk}, \cite{kr}.
It was showed \cite{gh} that
quasigroups with few associative triples
can be used in the design of hash functions in cryptography.
Gro\v sek and Hor\'ak 
conjectured that  \maqs\ do not exist when $|Q|>1$
\cite[Conjecture~1.2]{gh}.
An example of a~\maq\ of order~9 was found recently \cite{dv2}.
No example of order greater than~$1$
and less than~$9$ exists,
and no example of order~$10$ exists \cite{dv1,dv2}.
It is known 
\cite[Theorem~1.1]{gh}
that any maximally nonassociative quasigroup must
be {\em idempotent,} that is, $x*x=x$ for all $x\in Q$.

Very recently Dr\'apal and Lison\v{e}k \cite{DL}
used Dickson's quadratic nearfields to prove
existence of an infinite set of \maqs.
Specifically they proved:

\begin{theorem}\cite[Corollary~5.8]{DL}
Let $m=2^{3k}r$ where $k\ge 0$ is an integer and $r$ is odd.
There exists a maximally nonassociative quasigroup
of order $m^2$.
\end{theorem}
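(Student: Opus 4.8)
\emph{Proof strategy.} The plan is to assemble the quasigroup multiplicatively and to build the elementary blocks on quadratic Dickson nearfields. The first ingredient is the behaviour of $a(\cdot)$ under direct products: a triple of pairs in $Q_1\times Q_2$ is associative if and only if each of its two coordinate triples is associative, so $a(Q_1\times Q_2)=a(Q_1)\,a(Q_2)$ while $|Q_1\times Q_2|=|Q_1|\,|Q_2|$. Since $a(Q_i)\ge|Q_i|$ always holds, the direct product of two \maqs\ is again maximally nonassociative. Writing $m^2=2^{6k}\prod_{p}p^{2e_p}$, where the product runs over the odd primes dividing $r$ and $e_p$ denotes the exponent of $p$ in $m$, it suffices to produce a \maq\ of order $(p^{e_p})^2$ for every odd prime power $p^{e_p}$, together with a \maq\ of order $2^{6k}$, and then to take the direct product of all of these.

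For the odd blocks I would work on the field $\F_{q^2}$ with $q=p^{e_p}$, endowed with the quadratic Dickson nearfield structure: the nearfield product applies the Frobenius $x\mapsto x^q$ to one of its factors precisely when a designated argument is a \ns, and leaves it untouched when that argument is a \sq. This twist is exactly what makes the resulting idempotent quasigroup $(Q,*)$ genuinely non-medial; by contrast a purely affine rule $x*y=\alpha x+\beta y$ with $\alpha+\beta=1$ is translation invariant and one checks directly that it has $|Q|^2$ associative triples, far too many. The core computation is then to expand $(x*y)*z=x*(y*z)$ using the definition of the nearfield product and to record, for each of the finitely many sign patterns of the quadratic character $\chi$ on the expressions that arise after a single multiplication, the (now linear) condition that associativity imposes on $(x,y,z)$.

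The decisive step is to show that this case analysis produces no associative triples beyond the forced ones. Idempotency already supplies the diagonal triples $(c,c,c)$, which account for $|Q|$ associative triples, and maximal nonassociativity is precisely the assertion that there are no others; so what must be proved is that every off-diagonal triple fails to associate. Here the special identities of the Dickson nearfield---associativity of the nearfield product together with the multiplicativity $\chi(ab)=\chi(a)\chi(b)$ of the character that governs the Frobenius twist---should force the contributions of the various sign patterns to cancel exactly, leaving no off-diagonal solution and giving $a(Q)=|Q|$ on the nose (for $q=3$ this recovers the known example of order $9$). This exact cancellation is the main obstacle: unlike a mere estimate, equality tolerates no stray triple, and I expect the heaviest part of the work to be the bookkeeping showing that every character pattern other than the trivial one yields an inconsistent linear system. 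The same count may instead be written as a sum of quadratic character sums and bounded by the Weil inequality, as is done in the present paper; this is what replaces the exact nearfield computation and extends the method to orders that are not squares of this special shape.

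Finally I would dispose of the factor $2^{6k}$. A quadratic Dickson nearfield cannot exist in characteristic $2$, since $(2^a,2)$ is never a Dickson pair, and this is exactly why the $2$-part has to be isolated and why $m$ carries the cube $2^{3k}$ rather than an arbitrary power of $2$. Instead I would build this block from the cubic Dickson nearfield over $\F_{4^k}$, which has order $(4^k)^3=2^{6k}$ and exists because $3\mid 4^k-1$; running the analogous construction there, with a cubic character now controlling the twist, should yield a \maq\ of order $2^{6k}$. Combining this block with the odd blocks through the multiplicativity of $a(\cdot)$ then produces a \maq\ of order $m^2$, completing the proof.
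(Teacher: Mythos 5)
Your reduction is sound and matches the actual route in \cite{DL}: the multiplicativity $a(Q_1\times Q_2)=a(Q_1)\,a(Q_2)$ is correct (it is Lemma~\ref{lem-dir-product} of this paper), so it does suffice to realize each odd square $(p^{e_p})^2$ and a power-of-two block, and the odd squares are indeed handled in \cite{DL} by quadratic Dickson nearfields. The genuine gap is in what you yourself call the decisive step. Saying that the nearfield identities ``should force the contributions of the various sign patterns to cancel exactly'' is not an argument, and it also mischaracterizes the task: nothing cancels. What is required is to (i) fix the quasigroup operation concretely --- in \cite{DL} it involves a free parameter, playing the role of the element $a$ in Definition~\ref{def-Lab}; (ii) for each sign pattern of the quadratic character on the subexpressions arising in $(x*y)*z=x*(y*z)$, solve the resulting linear equation and show that its solution either does not exist or violates the assumed sign pattern; and (iii) prove that a parameter satisfying the whole list of character conditions produced by (ii) exists in $\F_{q^2}$ for \emph{every} odd prime power $q$, including the small ones. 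Step (iii) is where the real difficulty lies --- it is precisely the role played by the Weil bound in the present paper, and \cite{DL} needs separate arguments and computer verification for small orders. Your sketch supplies none of (i)--(iii), so the core of the theorem is asserted rather than proved.

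The second gap is your treatment of the factor $2^{6k}$. The claim that running the ``analogous construction'' on the cubic Dickson nearfield over $\F_{4^k}$ yields a maximally nonassociative quasigroup for every $k$ is not a known theorem and does not follow from anything you wrote: with a cubic twist the entire case analysis changes (three multiplicative cosets instead of two, so the parity bookkeeping breaks down), and \cite{DL} establishes only the single order $2^6$ (their Lemma~5.7), not a family. Fortunately the strong claim is unnecessary: once one example of order $2^6$ exists, your own direct-product observation gives $2^{6k}=(2^6)^k$ for all $k\ge 1$, while $k=0$ contributes nothing. Replacing your cubic-nearfield family by this one-line reduction, and filling in the missing nearfield case analysis for odd $q$, is what would be needed to turn your outline into the proof of the cited result.
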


In this paper we greatly  extend
the set of orders for which the existence
of \maqs\ can be proved (see Theorem~\ref{thm-main} below).
As well, our approach is more constructive
in comparison to the methods of~\cite{DL}.

\subsection{Prescribed automorphism groups}

The method of prescribed automorphism group has been widely
used to construct combinatorial objects with distinguished properties.
One starts with some permutation group $G$, and with the assumption
that $G$ is a group of symmetries of the desired object
(but perhaps not its full automorphism group).
It is then sufficient to specify the behaviour of the object
on the orbits of $G$, thus cutting down the complexity
of computer search or theoretical proofs.
Often there is some natural choice for $G$, which may or may not work.
If $G$ itself does not work,
then one uses proper subgroups of $G$.
This process will proceed by increasing the index of the subgroup,
from smaller index to larger index. The reason for proceeding
in this particular way is that the number of orbits (and thus
the complexity of computer search or theoretical proofs)
will generally increase
with the index of the subgroup.

Let $(Q,*)$ be a quasigroup
and let $L$ be its multiplication table.
Then $L$ is a Latin square with symbol set~$Q$.
Rows and columns of $L$ are also indexed by $Q$.
Letting $L(x,y)$ denote
the entry in row $x$ and column $y$ of~$L$, we have $L(x,y)=x*y$.
This relation couples $L$ and $Q$,
and it makes sense to speak of ``quasigroup $L$'' as well.

An automorphism of $L$ is a bijection $f: Q\rightarrow Q$
such that $L(f(x),f(y))=f(L(x,y))$
for all $x,y\in Q$.
We note that $(x,y,z)$ is associative
if and only if $(f(x),f(y),f(z))$ is associative.
Note that the definition of automorphism makes sense
even when the mapping $L$ does not possess the Latin property
(i.e., $L$~is not a quasigroup).
We will use this broader definition of automorphism in Lemma~\ref{Lab-G}
below.

We wish to apply the idea of prescribed
automorphism group $G$ to constructing \maq\ $L$.
Since $L$ is known to be idempotent,
we need to specify only the values $L(x,y)$ for $x\neq y$.
The most desirable choice  for $G$ would be a group
that acts primitively transitively on the ordered pairs $(x,y)$
with $x\neq y$, known as {\em sharply 2-transitive (S2T) group.}
It is known that finite S2T groups are in one-to-one correspondence with
{\em nearfields,}  and this underlies the approach taken in~\cite{DL},
where \maqs\ are obtained from {\em proper} nearfields,
that is, those which are not fields.

It still remains to explore the case when the nearfield
is not proper, that is, it is the finite field of order~$q$,
denoted $\F_q$.
In this case the corresponding S2T group
is the Frobenius group of invertible
affine mappings $x\mapsto \alpha x+\beta$ on $\F_q$.
Let $G_q$ denote this group.
It appears that $G_q$ itself can not be used to produce \maqs\ \cite{DL}.
Therefore, following the strategy outlined above,
one proceeds to proper subgroups of $G_q$ of low index.
Luckily, when $q$ is odd, there is always
a subgroup of index~$2$, which is the group of invertible
affine mappings $x\mapsto \alpha x+\beta$ 
where $\alpha$ is a non-zero square in $\F_q$.
Amazingly, this group works for constructing 
a very large set of \maqs, and this is the approach that we take
in this paper.

\section{Constructions}


By the {\em parity} of $a\in\Fq$ we mean the squareness of~$a$,
hence the parity is either ``square'' or ``\ns.''

\begin{definition}
\label{def-Lab}
Let $q$ be an odd prime power.
For fixed $a,b\in\Fq$ define the mapping $L_{a,b}:\Fq\times\Fq\ra\Fq$ by
\[
L_{a,b}(x,y)=
\begin{cases}
x+a(y-x) &\mbox{if\ $y-x$\ is\ \sq}\\
x+b(y-x) &\mbox{if\ $y-x$\ is\ \ns}.
\end{cases}
\]
\end{definition}
Note that $L_{a,b}(x,x)=x$ for all $x\in\Fq$.
In the special case $a=b\neq 0,1$ we get a classical
construction of idempotent Latin squares due
to Bose, Parker and Shrikhande \cite[p.~288]{vLW}.

\begin{lemma}
\label{Lab-G}
Let $q$ be an odd prime power
and let $L_{a,b}:\Fq\times\Fq\ra\Fq$ be as above.
Let $G_q^{(2)}$ denote the group of mappings $u\mapsto \alpha u+\beta$
where $\alpha,\beta\in\Fq$ and $\alpha$ is a non-zero square.
Then $G_q^{(2)}$ is an automorphism group of $L_{a,b}$.
\begin{proof}
Let $f\in G_q^{(2)}$,
$f(u)= \alpha u+\beta$.
Let $(x,y)\in\F_q^2$ and assume $y-x$ is square. Then $f(y)-f(x)=\alpha(y-x)$
is also a square, and
\begin{eqnarray*}
f(L_{a,b}(x,y))
&=&f(x+a(y-x))
=\alpha(x+a(y-x))+\beta\\
&=&
\alpha x+\beta + a(\alpha(y-x))
=
f(x)+a(f(y)-f(x))\\
&=&L_{a,b}(f(x),f(y)).
\end{eqnarray*}
The case when $y-x$ is \ns\ is analogous.
\end{proof}
\end{lemma}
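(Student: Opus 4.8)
The plan is to verify directly the defining property of an automorphism, namely that $L_{a,b}(f(x),f(y))=f(L_{a,b}(x,y))$ holds for every $f\in G_q^{(2)}$ and every $(x,y)\in\F_q^2$. Writing $f(u)=\alpha u+\beta$ with $\alpha$ a nonzero square, I would first record the identity $f(y)-f(x)=\alpha(y-x)$, which already isolates the one fact on which everything turns.

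The key step is the observation that multiplication by the nonzero square $\alpha$ preserves parity: $\alpha(y-x)$ is a square precisely when $y-x$ is, and a non-square precisely when $y-x$ is. This holds because the nonzero squares form an index-$2$ subgroup of $\F_q^*$, so multiplying by a square stabilizes each of the two cosets (the squares and the non-squares) setwise; the remaining case $y-x=0$ is trivially preserved. Hence the same branch of Definition~\ref{def-Lab} is selected for the pair $(x,y)$ as for the pair $(f(x),f(y))$.

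Once the branches are matched, what remains is a routine affine computation. In the square case I would expand $f(L_{a,b}(x,y))=\alpha\bigl(x+a(y-x)\bigr)+\beta$ and regroup it as $f(x)+a\bigl(f(y)-f(x)\bigr)$, which is exactly $L_{a,b}(f(x),f(y))$ since $f(y)-f(x)$ is then a square; the non-square case is identical with $b$ replacing $a$. The diagonal case $x=y$ requires no separate treatment, as both sides collapse to $f(x)$ by idempotency. I would also note in passing that $G_q^{(2)}$ is indeed a group, since the leading coefficient of a composite is a product of two squares and hence again a square, and inverses behave likewise.

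The main obstacle here is conceptual rather than technical: one must recognize that parity-preservation is exactly the property needed, and that it is precisely what separates the index-$2$ subgroup $G_q^{(2)}$ from the full affine group $G_q$. Were $\alpha$ allowed to be an arbitrary nonzero element, multiplication by a non-square would interchange the two branches of $L_{a,b}$ and the automorphism identity would break; it is the restriction to square $\alpha$ that makes the argument go through.
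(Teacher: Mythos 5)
Your proposal is correct and follows essentially the same route as the paper's own proof: observe that multiplication by the nonzero square $\alpha$ preserves the parity of $y-x$, so the same branch of Definition~\ref{def-Lab} is selected on both sides, and then verify the affine identity $f(x+a(y-x))=f(x)+a\bigl(f(y)-f(x)\bigr)$, with the non-square case analogous. Your additional remarks (the diagonal case, closure of $G_q^{(2)}$, and why square $\alpha$ is essential) are accurate but not needed beyond what the paper records.
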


Letting $G_q^{(2)}$ act naturally on $\F_q^2$
we observe that there are three orbits. One orbit
contains the diagonal pairs $(x,x)$,
and  $G_q^{(2)}$ acts
primitively on the remaining two orbits,
one of which contains pairs $(x,y)$ where $y-x$ is non-zero square
and 
the other orbit contains pairs where $y-x$ is \ns.

\begin{proposition}
\label{prop-LS}
Let $q$ be an odd prime power 
and let $a\in\Fq$ be such that
$a\not\in\{-1,0,1\}$ and 
both $a$ and $a+1$ are squares. Then the mapping $L_{a,a^2}$
is a Latin square.
\end{proposition}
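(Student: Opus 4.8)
The plan is to verify directly that the $q\times q$ array $L_{a,a^2}$ is a Latin square by checking that each row and each column is a permutation of $\Fq$; since the array has $q$ symbols and $q$ cells per line, it suffices to show each such map is injective, equivalently bijective. For a fixed first coordinate $x$ I would consider the row map $y\mapsto L_{a,a^2}(x,y)$, and for a fixed second coordinate $y$ the column map $x\mapsto L_{a,a^2}(x,y)$. In both cases I substitute $t=y-x$, which ranges over all of $\Fq$ as the free variable does, and I split $\Fq$ into the three blocks $\{0\}$, the nonzero squares, and the non-squares. The only facts about $\Fq$ I need are that these last two blocks have equal size $(q-1)/2$ and that multiplication by a fixed nonzero element is a bijection preserving squareness exactly when the multiplier is a nonzero square.

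For the rows, writing the output as $x+\phi(t)$ gives $\phi(0)=0$, $\phi(t)=at$ on nonzero squares, and $\phi(t)=a^2t$ on non-squares. Because $a\neq 0$ and $a$ is a square by hypothesis, both $a$ and $a^2$ are nonzero squares, so multiplication by either preserves the square/non-square partition. Hence $\phi$ maps nonzero squares bijectively to nonzero squares and non-squares bijectively to non-squares, fixes $0$, and is therefore a bijection, so every row is a permutation. This step uses only that $a$ is a nonzero square, and I expect it to be routine.

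For the columns, writing the output as $y+\psi(t)$ gives $\psi(0)=0$, $\psi(t)=(a-1)t$ on nonzero squares, and $\psi(t)=(a^2-1)t$ on non-squares, where $a^2-1=(a-1)(a+1)$; both multipliers are nonzero since $a\neq\pm 1$. The crux is to control the squareness of these two multipliers, and here the hypothesis that $a+1$ is a square is decisive: it forces $a^2-1=(a-1)(a+1)$ to have the \emph{same} squareness type as $a-1$. I would then finish by a two-case analysis. If $a-1$, and hence $a^2-1$, is a square, then $\psi$ preserves the partition just as $\phi$ did and is a bijection. If $a-1$, and hence $a^2-1$, is a non-square, then $\psi$ sends the nonzero squares into the non-squares and the non-squares into the nonzero squares; each restriction is injective and the two blocks have equal size, so $\psi$ interchanges them bijectively and again fixes $0$, hence is a bijection. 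Either way every column is a permutation, completing the proof.

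I expect the column property to be the main obstacle, and within it the single key point is the parity-matching forced by $a+1$ being a square: it is precisely this that keeps the two branches $t\mapsto(a-1)t$ and $t\mapsto(a^2-1)t$ from colliding. Were $a+1$ a non-square instead, the two branches would carry both nonzero squares and non-squares into the same block, immediately destroying injectivity; so this hypothesis cannot be dropped, and recognizing its role is the heart of the argument.
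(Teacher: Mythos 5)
Your proof is correct and is essentially the paper's own argument: rows are injective because $a$ and $a^2$ are nonzero squares, and columns because $a+1$ being a nonzero square forces $a-1$ and $a^2-1=(a-1)(a+1)$ to have the same quadratic character, so the square branch and the non-square branch cannot collide. The paper packages this as a collision-contradiction (a cross-parity coincidence in a column gives $s(a-1)=n(a^2-1)$, hence $s=n(a+1)$, a square equal to a non-square), whereas you describe the row and column maps as explicit block-wise permutations of $\{0\}$, the nonzero squares and the non-squares; this is a presentational difference only, with the hypotheses used in exactly the same places.
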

\begin{proof}
Assume that $L_{a,a^2}(x,y_1)=L_{a,a^2}(x,y_2)$.
If $y_1-x$ and $y_2-x$ have the same parity, then
$x+a^k(y_1-x)=x+a^k(y_2-x)$ for some $k\in\{1,2\}$ and $y_1=y_2$
since $a\neq 0$.
If $y_1-x$ and $y_2-x$ have opposite parities, then
without loss of generality assume that
$y_1-x$ is \sq\ and $y_2-x$ is \ns.
Since $a$ is square, we get that $a(y_1-x)$ is \sq\
but $a^2(y_2-x)$ is \ns, which is a contradiction.

Now assume that $L_{a,a^2}(x_1,y)=L_{a,a^2}(x_2,y)$.
If $y-x_1$ and $y-x_2$ have the same parity, then
$x_1+a^k(y-x_1)=x_2+a^k(y-x_2)$ for some $k\in\{1,2\}$.
Then $x_1(1-a^k)=x_2(1-a^k)$ and $x_1=x_2$ since $a\neq -1,1$.
Finally assume that 
$y-x_1$ and $y-x_2$ have opposite parities. Without loss
of generality assume that $s=y-x_1$ is \sq\ and $n=y-x_2$ is \ns,
then we get
\begin{eqnarray*}
x_1+a(y-x_1) & = & x_2+a^2(y-x_2) \\
y-s+as & = & y-n+a^2n \\
s(a-1) & = & n(a^2-1) \\
s & = & n(a+1).
\end{eqnarray*}
Since $a+1$ is assumed to be \sq\
and $a\neq -1$, the right-hand side of the last equation
is \ns\ while its left-hand side is a \sq, a contradiction.
\end{proof}

\begin{theorem}
\label{thm-q14}
Let $q\equiv 1\pmod{4}$ be a prime power.
Let $a\in \F_q$ be such that
$a\neq -1,0,1$, 
and the elements $a$, $a+1$, $a^3-a-1$ are \sq s,
and the elements $a-1$, $a^2+1$, $a^2-a-1$, $a^2+a+1$, $a^2+a-1$ are \ns s.
Then $L_{a,a^2}$ is a \maq.
\end{theorem}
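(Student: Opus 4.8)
The plan is to show that the only associative triples of $L_{a,a^2}$ are the diagonal ones $(x,x,x)$, of which there are exactly $q$. First I would record that $L_{a,a^2}$ is genuinely a quasigroup: the hypotheses include that $a$ and $a+1$ are squares and $a\notin\{-1,0,1\}$, so Proposition~\ref{prop-LS} applies. Since $L_{a,a^2}$ is idempotent, the argument in the introduction gives $a(Q)\ge q$, with equality exactly when no non-diagonal triple is associative. Because automorphisms preserve associativity and $G_q^{(2)}$ is an automorphism group (Lemma~\ref{Lab-G}), the set of associative triples is a union of $G_q^{(2)}$-orbits, so it suffices to test one representative from each orbit of non-diagonal triples. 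The key reduction is that $G_q^{(2)}$ acts simply transitively on each of the two orbits of ordered pairs $(x,y)$ with $x\neq y$, distinguished by the parity of $y-x$: given the map $f(u)=\alpha u+\beta$, matching $(x_1,y_1)$ to $(x_2,y_2)$ forces $\alpha=(y_2-x_2)/(y_1-x_1)$, which is a square precisely when the two differences have equal parity. Hence, fixing a non-square $\eta$, every triple with $x\neq y$ is equivalent to one whose first two coordinates are $(0,1)$ (when $y-x$ is a square) or $(0,\eta)$ (when $y-x$ is a non-square), the third coordinate becoming a free parameter $t$.

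Next I would dispose of triples with a repeated coordinate. If $x=y$ or $y=z$, then idempotency reduces associativity to $x*z=x*(x*z)$, respectively $(x*y)*y=x*y$, and the cancellation laws (using $a\neq 0$ and $a,a^2\neq 1$, which hold since $a\neq -1,0,1$) force the triple onto the diagonal. The remaining coincidence case $x=z\neq y$ reduces, by the normalization above, to the two triples $(0,1,0)$ and $(0,\eta,0)$; for each, a direct evaluation of $(x*y)*z$ and $x*(y*z)$ via Definition~\ref{def-Lab}, using that $-1$, $a$, $a+1$ are squares and $a-1$ is a non-square (so that $1-a$ is a non-square), shows that associativity would force $a=a^2$, contradicting $a\notin\{0,1\}$.

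The heart of the proof is the case of three distinct coordinates, where I must rule out associativity for the two one-parameter families $(0,1,t)$ and $(0,\eta,t)$. Writing $u*v=u+c(v-u)$ with $c\in\{a,a^2\}$ determined by the parity of $v-u$, the equation $(x*y)*z=x*(y*z)$ involves three parity switches: for $(0,1,t)$ these are the parities of $t-1$, of $t-(0*1)=t-a$, and of the intermediate product $w:=1*t$ (with $t-\eta$, $t-a^2\eta$, and $w:=\eta*t$ for the second family). For each of the eight sign patterns per family the piecewise rule linearizes the equation in $t$; the equation is then either inconsistent, or it has a unique solution $t_0$ that is a rational function of $a$. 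I would substitute $t_0$ back, compute the parities of the differences that actually occur, and show that in every pattern at least one of them contradicts the assumed pattern thanks to a square/non-square hypothesis on $a$. For instance, the pattern with all three products read as $a,a,a^2$ yields $t_0=(a-1)/(a+1)$, whence $t_0-a=-(a^2+1)/(a+1)$ would have to be a square; this is impossible since $a^2+1$ is a non-square and $a+1$ is a square. The remaining patterns are excluded analogously by the conditions on $a-1$, $a^2-a-1$, $a^2+a+1$, $a^2+a-1$, and $a^3-a-1$.

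The \emph{main obstacle} is the bookkeeping in this last step: there are sixteen sign patterns across the two families, the intermediate value $w$ changes the relevant differences from pattern to pattern, and one must verify that the eight listed conditions on $a$ suffice to kill every pattern. Care is also needed with the degenerate patterns, where the coefficient of $t$ vanishes and one must check that the resulting equation is simply inconsistent, and with patterns whose unique solution $t_0$ lands on a forbidden value such as $0$, $1$, $\eta$, or $a$; these correspond to the non-distinct triples already treated in the previous step rather than to genuine counterexamples.
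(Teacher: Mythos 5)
Your proposal is correct and follows essentially the same route as the paper's proof: Latin property via Proposition~\ref{prop-LS}, reduction by the automorphism group $G_q^{(2)}$ of Lemma~\ref{Lab-G} to the two one-parameter families $(0,1,z)$ and $(0,\eta,z)$, and then the eight parity patterns per family, each either inconsistent or linearized to a unique $z^*$ that is killed by a square/non-square hypothesis on $a$ (your sample case matches row two of the paper's Table~\ref{tab-1-1} exactly). The only difference is organizational: you treat the repeated-coordinate triples ($y=z$ and $x=z\neq y$) as separate preliminary cases, whereas the paper absorbs them into the sixteen tabulated cases, handling separately only the triples $(t,t,v)$.
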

\begin{proof}
Since the assumptions of Proposition~\ref{prop-LS}
are a subset of the assumptions of Theorem~\ref{thm-q14},
it follows that $L_{a,a^2}$ as a Latin square.
For simplicity let us abbreviate $L_{a,a^2}$ as $L$.

Since $q\equiv 1\pmod{4}$, we note that $-1$ is a \sq,
which will be used often throughout this proof.
Let $\eta\in\Fq$ be a fixed \ns.

First we show that there are
no associative triples of the form $(t,t,v)$, where $t\neq v$.
For any such triple there exists an automorphism $f$
of $L_{a,a^2}$ such that 
$(f(t),f(t),f(v))=(0,0,z)$ for some $z\in\Fq$, $z\neq 0$.
Towards a contradiction assume that 
$L(L(0,0),z)=L(0,L(0,z))$. This simplifies
to $L(0,z)=L(0,L(0,z))$. Let $\ell=L(0,z)$.
From Definition~\ref{def-Lab}
we get $\ell=a\ell$ or $\ell=a^2\ell$.
Since $a\neq \pm 1$, we get $L(0,z)=0$.
Since $L(0,0)=0$ and $L$ is a Latin square,
we get $z=0$, a contradiction.

Now let $(t,u,v)\in\F_q^3$ be an associative triple
such that $t\neq u$.  
If $u-t$ is \sq,
then there exists an automorphism $f$ of $L_{a,a^2}$
such that $(f(t),f(u),f(v))=(0,1,z)$
for some $z\in\Fq$.
If $u-t$ is \ns,
then there exists an automorphism $f$ of $L_{a,a^2}$
such that $(f(t),f(u),f(v))=(0,\eta,z)$
for some $z\in\Fq$.
Hence it is sufficient to prove the nonexistence
of associative triples of the form $(0,1,z)$ and $(0,\eta,z)$.

Suppose that $(0,1,z)$ is an associative triple,
that is, $L(L(0,1),z)=L(0,L(1,z))$.
Since $1$ is \sq, we get $L(0,1)=0+a(1-0)=a$
and the associative triple condition simplifies to
\begin{equation}
\label{eq-01z}
L(a,z)=L(0,L(1,z)).
\end{equation}
The three values of the function $L$ seen in equation (\ref{eq-01z})
depend on the parities of the three elements 
\begin{equation}
\label{eq-c1-c2-c3}
c_1=z-a, \ \ \ c_2=z-1, \ \ \ c_3=L(1,z).
\end{equation}
Thus there are eight cases to consider. In each case
the parities of $c_1$, $c_2$ and $c_3$ are fixed,
and equation (\ref{eq-01z}) can be written down explicitly
in terms of $a$ and $z$ and no other variables or functions.
This equation may be free of~$z$, in which case it turns out
to be never satisfied due to the assumptions on~$a$,
or the equation is {\em linear} in $z$
and it has a unique root in $\Fq$ which we denote $z^*$.
The contradiction is then obtained by showing that
at least one of the elements 
$z^*-a$, $z^*-1$, $L(1,z^*)$ has the {\em opposite} parity
in comparison to
what was originally assumed, hence showing
that this case can not occur. Once all eight cases are shown
to be impossible, it follows that there is no $z\in\Fq$
such that $(0,1,z)$ is an associative triple.

\begin{table}[H]
\begin{center}
\begin{tabular}{lllll}
\hline
$z-a$  \ \ &$z-1$  \ \ &$L(1,z)$  \ \ \ \ \ \ &$z^*$\ \ \ \ \ \  \ \ \ \ \ \ &contradiction\\
\hline
S &S &S   &$0 $    &$\L1=1-a  \P$ is N     \\
S &S &N   &$\frac{a-1}{a+1}  \P$    &$z^*-a=-\frac{a^2+1}{a+1} $ is N      \\
S &N &S   &$\frac{a}{a+1}  \P$    &$z^*-1=-\frac{1}{a+1} $ is S      \\
S &N &N   &$\frac{a^2+a-1}{a^2+a+1} \P $    &$\L1=-\frac{a^2-a-1}{a^2+a+1} $ is S      \\
N &S &S   &$\rm{none}  \P$    &$  $      \\
N &S &N &$-\frac{1}{a}   \P$ &$\L1=-a  $ is S\\ 
N &N &S   &$ 0 \P$    &$z^*-a=-a  $ is S      \\
N &N &N   &$\frac{a-1}{a}  \P$    &$z^*-1=-\frac{1}{a} $ is S     \\
\hline
\end{tabular}
\end{center}
\caption{Non-existence of associative triples $(0,1,z)$ when $q\equiv 1\pmod{4}$.}
\label{tab-1-1}
\end{table}

\begin{table}[H]
\begin{center}
\begin{tabular}{lllll}
\hline
$z-a^2\e$  \ \ &$z-\e$  \ \ &$L(\e,z)$  \ \ \ \ \ \ &$z^*$\ \ \ \ \ \  \ \ \ \ \ \ &contradiction\\
\hline
S &S &S   &$-\e(a-1)  \P$    &$z^*-\e=-a\e $ is N      \\
S &S &N   &$0  \P$    &$z^*-\e=-\e $ is N      \\
S &N &S   &$\frac{\e}{a+1}  \P$    &$\Le=-\frac{\e(a^3-a-1)}{a+1} $ is N     \\
S &N &N   &$\frac{a^2\e}{a^2+a+1}  \P$    
   &$z^*-\e=-\frac{\e(a+1)}{a^2+a+1} $ is S       \\
N &S &S   &$\rm{none}  \P$    &$ $      \\
N &S &N &$-a\e \P$ &$z^*-\e=-(a+1)\e $ is N\\ 
N &N &S   &$-\frac{(a^2-1)\e}{a}  \P$    &$z^*-\e=-\frac{\e(a^2+a-1)}{a} $ is S     \\
N &N &N   &$ 0 \P$    &$\Le=-\e(a-1)(a+1) $ is S      \\
\hline
\end{tabular}
\end{center}
\caption{Non-existence of associative triples $(0,\eta,z)$ when $q\equiv 1\pmod{4}$.}
\label{tab-1-e}
\end{table}

We show all details of the proof for two of the eight cases;
the other cases are similar. 
The details of all eight cases are summarized in Table~\ref{tab-1-1},
where S denotes \sq\ and N denotes \ns.

First assume that $c_1=z-a$ is \ns\ 
and $c_2=z-1$, $c_3=L(1,z)$ are both \sq.
Applying Definition~\ref{def-Lab} 
we get $L(1,z)=1+a(z-1)$ and then 
applying Definition~\ref{def-Lab} 
to equation (\ref{eq-01z}) we get
\begin{eqnarray*}
L(a,z) & = & L(0,L(1,z)) \\
a+a^2(z-a) & = & 0 + a(1+a(z-1)   - 0)  \\
a^2(a-1) & = &  0 
\end{eqnarray*}
which is impossible since $a\neq 0,1$. This is a case where no $z^*$ exists.

Next assume that $c_1=z-a$ is \ns,
$c_2=z-1$ is \sq, and $c_3=L(1,z)$ is \ns.
Applying Definition~\ref{def-Lab} 
we get $L(1,z)=1+a(z-1)$ and then 
applying Definition~\ref{def-Lab} 
to equation (\ref{eq-01z}) we get
\begin{eqnarray*}
L(a,z) & = & L(0,L(1,z)) \\
a+a^2(z-a) & = & 0 + a^2(1+a(z-1)   - 0)  \\
a(a-1)(az+1) & = &  0.
\end{eqnarray*}
Since $a\neq 0,1$, the last equation
has the unique solution $z^*=-1/a$.
By assumption, any $z$ occurring in this subcase
satisfies that $z-1$ is \sq,
hence we can evaluate 
\[
L(1,z^*)=1+a(z^*-1)=1+a(-1/a-1)=1-1-a=-a.
\]
Since $-1$ is square when $q\equiv 1\pmod{4}$
and $a$ is assumed to be square,
it follows that $-a$, and hence also $L(1,z^*)$, is \sq.
This contradicts the assumption that $L(1,z)$ is \ns.

The remaining six cases are similar.
The details of all eight cases are recorded in Table~\ref{tab-1-1}.

In the second half of the proof
suppose that $(0,\e,z)$ is an associative triple,
that is, $L(L(0,\e),z)=L(0,L(\e,z))$.
Since $\e$ is \ns, we get $L(0,\e)=0+a^2(\e-0)=a^2\e$
and the associative triple condition simplifies to
\begin{equation}
\label{eq-0ez}
L(a^2\e,z)=L(0,L(\e,z)).
\end{equation}
The three values of the function $L$ seen in equation (\ref{eq-0ez})
now depend on the parities of the three elements 
\begin{equation}
\label{eq-d1-d2-d3}
d_1=z-a^2\e, \ \ \ d_2=z-\e, \ \ \ d_3=L(\e,z).
\end{equation}
Again we decompose this half of the proof into eight
cases that cover all possible combinations
of parities of $d_1,d_2,d_3$ and we drive each case to a contradiction.
The details are recorded in Table~\ref{tab-1-e}.

Note that all field elements recorded in Tables~\ref{tab-1-1}
and \ref{tab-1-e} exist, that is, there is no division by zero.
The denominators occurring in the tables
are $a$, $a+1$ and $a^2+a+1$. The first two can not be zero because $a\neq 0,-1$,
and the last one can not be zero because $a^2+a+1$ is assumed to be \ns.
This completes the proof of the theorem.
\end{proof}

\begin{theorem}
\label{thm-q34}
Let $q\equiv 3\pmod{4}$ be a prime power.
Let $a\in \F_q$ be such that
$a\neq -1,0,1$, 
and the elements $a$, $a+1$, $a-1$, $a^2+1$, $a^3+a^2-1$  are \sq s,
and the elements $a^2-a+1$, $a^2+a+1$, $a^2+a-1$  are \ns s.
Then $L_{a,a^2}$ is a \maq.
\end{theorem}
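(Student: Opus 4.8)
The plan is to follow the proof of Theorem~\ref{thm-q14} essentially verbatim, changing only the parity arithmetic to reflect that now $-1$ is a \emph{non-square} (because $q\equiv 3\pmod 4$) rather than a square. First I would check that the hypotheses of Proposition~\ref{prop-LS} hold: here $a\neq -1,0,1$ and both $a$ and $a+1$ are squares, so $L_{a,a^2}$ is a Latin square, which I abbreviate by $L$; I would also fix a non-square $\eta$.

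Next I would observe that two structural parts of the earlier proof carry over unchanged because they are independent of $q\bmod 4$. The nonexistence of associative triples $(t,t,v)$ with $t\neq v$ used only $a\neq\pm1$ and the Latin property: after mapping to $(0,0,z)$ one gets $L(0,z)=L(0,L(0,z))$, whence $\ell=a\ell$ or $\ell=a^2\ell$ for $\ell=L(0,z)$, forcing $\ell=0$ and then $z=0$. Likewise the reduction of a general triple $(t,u,v)$ with $t\neq u$ to the two canonical forms $(0,1,z)$ and $(0,\eta,z)$ uses only the automorphism group $G_q^{(2)}$ of Lemma~\ref{Lab-G} and the orbit structure described after it, both valid for every odd $q$.

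The substance of the proof is then the two eight-case analyses, one for $(0,1,z)$ via equation~(\ref{eq-01z}) and one for $(0,\eta,z)$ via equation~(\ref{eq-0ez}). A key simplification is that the linear equation arising in each of the sixteen cases, and hence the candidate root $z^*$ together with the test elements $z^*-a$, $z^*-1$, $L(1,z^*)$ (respectively $z^*-a^2\eta$, $z^*-\eta$, $L(\eta,z^*)$), is fixed purely by which coefficient ($a$ or $a^2$) each parity label selects. Since the eight label combinations are the same as before, the $z^*$ values and the test elements are \emph{identical} to those already tabulated in Tables~\ref{tab-1-1} and~\ref{tab-1-e}; in particular the two ``none'' rows again collapse to the impossible identity $a^2(a-1)=0$, independently of any parity. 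The only thing that changes is the \emph{squareness} of each test element, since a factor of $-1$ now toggles parity. I would therefore recompute both tables by re-evaluating, in each row, the parity of the test elements under the new rule and recording one element whose forced parity is opposite to the parity assumed for that row.

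The main obstacle is precisely this bookkeeping: I must verify that the squareness assumptions stated in the theorem ($a$, $a+1$, $a-1$, $a^2+1$, $a^3+a^2-1$ squares and $a^2-a+1$, $a^2+a+1$, $a^2+a-1$ non-squares) are exactly what is needed to force a contradiction in all sixteen cases. The subtlety is that the flip of $-1$ can shift the contradiction to a different test element than in the $q\equiv1$ case, and it turns former non-square requirements into square requirements; for instance the row where $z^*-a=-(a^2+1)/(a+1)$ must be a non-square forces $a^2+1$ to be a \emph{square} when $q\equiv 3\pmod 4$, whereas the same row forced $a^2+1$ to be a non-square when $q\equiv 1\pmod 4$. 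Finally I would note, exactly as before, that the only denominators occurring are $a$, $a+1$ and $a^2+a+1$, all nonzero because $a\neq 0,-1$ and $a^2+a+1$ is assumed to be a non-square, so no division by zero occurs and the proof is complete.
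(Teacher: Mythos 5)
Your proposal is correct and follows essentially the same route as the paper: the paper's own proof of Theorem~\ref{thm-q34} consists precisely of observing that the $(t,t,v)$ argument and the reduction to the canonical triples $(0,1,z)$ and $(0,\eta,z)$ carry over verbatim, that the sixteen values of $z^*$ are identical to those in Theorem~\ref{thm-q14}, and then redoing the parity bookkeeping in two new tables (Tables~\ref{tab-3-1} and~\ref{tab-3-e}) under the rule that $-1$ is now a non-square. Your worked example (the row whose contradiction element is $z^*-a=-(a^2+1)/(a+1)$, which forces $a^2+1$ to switch from a required non-square to a required square) matches the paper's Table~\ref{tab-3-1} exactly.

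One small correction to a side remark: the two ``none'' rows do not both collapse to $a^2(a-1)=0$. The row N\,S\,S of the $(0,1,z)$ analysis does, but the row N\,S\,S of the $(0,\eta,z)$ analysis collapses to $\eta\, a(a-1)(a^2+a-1)=0$, so ruling it out also uses $a^2+a-1\neq 0$, which holds because $a^2+a-1$ is assumed to be a non-square in both theorems. This does not affect the validity of your plan, since the row-by-row recomputation you describe would produce exactly this equation and dispose of it under the stated hypotheses.
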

\begin{proof}
The structure of the proof is very similar to the case $q\equiv 1\pmod{4}$.
The reason why we need a separate proof for $q\equiv 3\pmod{4}$
is, of course, that $-1$ is now \ns.

As in the previous proof we first note that
$L_{a,a^2}$ is a Latin square, and again
we will abbreviate $L_{a,a^2}$ as $L$.
Again let $\eta\in\Fq$ denote a fixed \ns.
We could have achieved some simplifications
by taking $\eta=-1$,
however we chose to keep the symbol $\eta$ to illustrate
the full analogy with the case $q\equiv 1\pmod{4}$.

The proof
of non-existence of associative triples of the form $(t,t,v)$, where $t\neq v$,
which was given for $q\equiv 1\pmod{4}$ above, does not require any changes.
The rest of the proof is organized exactly as in Theorem~\ref{thm-q14}.

\begin{table}[H]
\begin{center}
\begin{tabular}{lllll}
\hline
$z-a$  \ \ &$z-1$  \ \ &$L(1,z)$  \ \ \ \ \ \ &$z^*$\ \ \ \ \ \  \ \ \ \ \ \ &contradiction\\
\hline
S &S &S   &$0 \P$    &$z^*-1=-1 $ is N     \\
S &S &N   &$\frac{a-1}{a+1}  \P$    &$z^*-a=-\frac{a^2+1}{a+1} $ is N      \\
S &N &S   &$\frac{a}{a+1}  \P$    &$z^*-a=-\frac{a^2}{a+1} $ is N      \\
S &N &N   &$\frac{a^2+a-1}{a^2+a+1} \P $    
     &$z^*-a=-\frac{(a+1)(a^2-a+1)}{a^2+a+1} $ is N     \\
N &S &S   &$\rm{none}  \P$    &$  $      \\
N &S &N &$-\frac{1}{a}   \P$ &$z^*-1=-\frac{a+1}{a}  $ is N\\ 
N &N &S   &$ 0 \P$    &$\L1=-(a-1)(a+1)  $ is N      \\
N &N &N   &$\frac{a-1}{a}  \P$    &$z^*-a=-\frac{a^2-a+1}{a} $ is S     \\
\hline
\end{tabular}
\end{center}
\caption{Non-existence of associative triples $(0,1,z)$ when $q\equiv 3\pmod{4}$.}
\label{tab-3-1}
\end{table}

\begin{table}[H]
\begin{center}
\begin{tabular}{lllll}
\hline
$z-a^2\e$  \ \ &$z-\e$  \ \ &$L(\e,z)$  \ \ \ \ \ \ &$z^*$\ \ \ \ \ \  \ \ \ \ \ \ &contradiction\\
\hline
S &S &S   &$-\e(a-1)  \P$    &$z^*-a^2\e=-\e(a^2+a-1) $ is N      \\
S &S &N   &$0  \P$    &$\Le=-\e(a-1) $ is S      \\
S &N &S   &$\frac{\e}{a+1}  \P$    &$z^*-\e=-\frac{a\e}{a+1} $ is S     \\
S &N &N   &$\frac{a^2\e}{a^2+a+1}  \P$    
   &$z^*-a^2\e=-\frac{a^3\e(a+1)}{a^2+a+1} $ is N       \\
N &S &S   &$\rm{none}  \P$    &$ $      \\
N &S &N &$-a\e \P$ &$z^*-a^2\e=-a\e(a+1) $ is S\\ 
N &N &S   &$-\frac{(a^2-1)\e}{a}  \P$    
     &$z^*-a^2\e=-\frac{\e(a^3+a^2-1)}{a} $ is S     \\
N &N &N   &$ 0 \P$    &$z^*-a^2\e=-a^2\e $ is S      \\
\hline
\end{tabular}
\end{center}
\caption{Non-existence of associative triples $(0,\eta,z)$ when $q\equiv 3\pmod{4}$.}
\label{tab-3-e}
\end{table}

Of course, the values of $z^*$ in all 16~cases are the same
as those found in the proof of Theorem~\ref{thm-q14}.
The details are recorded in Tables~\ref{tab-3-1} and \ref{tab-3-e}.

There is no division by zero in the tables for the same
reasons as in the case $q\equiv 1\pmod{4}$,
namely it is assumed that
$a\neq 0,-1$
and $a^2+a+1$ is \ns.
\end{proof}


\section{Applying the Weil bound}

We state the well known {\em Weil bound} 
on multiplicative character sums.

\begin{theorem}\cite[Theorem~6.2.2]{Evans}
\label{thm-weil}
Let $g\in\F_q[x]$ be a polynomial of degree $d>0$
and $\chi:\F_q^*\rightarrow\C^*$ a non-trivial multiplicative character
of order $m$ (extended by zero to $\F_q$). Then, if $g$ is not
an $m$-th power in $\overline{\F}_q[x]$ (where $\overline{\F}_q$ 
is the algebraic closure of $\F_q$),
\begin{equation}
\label{eq-weil-bound}
\left| \sum_{x\in\F_q} \chi(g(x)) \right| \le (d-1)\sqrt{q}.
\end{equation}
\end{theorem}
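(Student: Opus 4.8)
The plan is to prove this multiplicative character sum bound by converting it into a point-counting problem on an auxiliary curve and then appealing to the Riemann hypothesis for curves over finite fields. Because $\chi$ has order $m$, for every $c\in\Fq$ with $c\neq 0$ the number of $y\in\Fq$ solving $y^m=c$ equals $\sum_{j=0}^{m-1}\chi^j(c)$, while $y^m=0$ has the single solution $y=0$. Writing $S_j=\sum_{x\in\Fq}\chi^j(g(x))$ for $1\le j\le m-1$, so that $S_1$ is the sum we must bound, a summation over $x$ identifies the number $N$ of affine $\Fq$-points of the superelliptic curve $C:\,y^m=g(x)$ as
\[
N=q+\sum_{j=1}^{m-1}S_j .
\]
Thus $S_1$ is a single constituent of the deviation of $N$ from $q$.

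A crude application of the Hasse--Weil bound to $C$ would only control the combined quantity $\sum_{j=1}^{m-1}S_j$, not the individual $S_1$. To isolate $S_1$ I would instead decompose the zeta function of $C$ along the characters $\chi^j$, which brings in the $L$-function $L(\chi,T)=\prod_i(1-\omega_i T)$ attached to $\chi$ and $g$. Here the hypothesis that $g$ is not an $m$-th power in $\overline{\F}_q[x]$ is exactly what is needed: if $g=h^m$ then $\chi(g(x))=\chi^m(h(x))=1$ whenever $h(x)\neq 0$, which would force $S_1$ close to $q$ and destroy the bound, so excluding this guarantees that $L(\chi,T)$ is an honest polynomial with no trivial factors $(1-T)$ or $(1-qT)$. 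A Riemann--Hurwitz ramification count then shows that $\deg L$ is at most $d-1$, and the standard relation between power sums of Frobenius eigenvalues and character sums gives $S_1=-\sum_i\omega_i$.

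The deep input, and the step I expect to be the real obstacle, is the Riemann hypothesis for the curve $C$: every inverse root satisfies $|\omega_i|=\sqrt q$. Granting this, the triangle inequality closes the argument,
\[
|S_1|=\Bigl|\sum_i\omega_i\Bigr|\le(\deg L)\,\sqrt q\le(d-1)\sqrt q .
\]
I would not attempt to reprove this eigenvalue estimate from scratch but would cite Weil's theorem on the zeta function of a curve. If a self-contained and elementary route were preferred, one could instead use the Stepanov--Bombieri method, which bounds the number of $\Fq$-points of $C$ directly by constructing an auxiliary polynomial that vanishes to high order at the rational points and then comparing its degree against that point count, thereby sidestepping $L$-functions and cohomology entirely. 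In every version the genuine difficulty is concentrated in this single bound on the Frobenius eigenvalues; the reduction to the curve, the degree estimate for $L$, and the closing application of the triangle inequality are all routine.
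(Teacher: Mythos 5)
The paper does not prove this statement at all: it is imported verbatim from the literature (Evans, \emph{Handbook of Finite Fields}, Theorem~6.2.2) and used as a black box, so there is no internal proof to compare yours against. Your outline is, in substance, the standard proof of Weil's bound as it appears in the sources the paper cites: pass to the Kummer cover $C:\,y^m=g(x)$, factor its zeta function as $Z_{\mathbb{P}^1}(T)\prod_{j=1}^{m-1}L(T,\chi^j)$, identify $S_1=\sum_{x\in\F_q}\chi(g(x))$ with the negative of the first power sum of the inverse roots of $L(T,\chi)$, bound $\deg L(T,\chi)\le d-1$ by a conductor/Riemann--Hurwitz count, and invoke $|\omega_i|=\sqrt q$ from the Riemann hypothesis for curves. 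You correctly locate the role of the hypothesis that $g$ is not an $m$-th power in $\overline{\F}_q[x]$ (it is what makes the associated character of the function field nontrivial, so that $L$ is a genuine polynomial free of trivial factors), and you correctly observe that a crude point count on $C$ only controls $\sum_{j}S_j$, so the character-by-character decomposition is genuinely needed. Two bookkeeping points would require care in a full write-up: the identity $S_1=-\sum_i\omega_i$ is exact only with the right normalization, namely as the coefficient of $T$ in the Dirichlet series $L(T,\chi_g)=\sum_{f\ \mathrm{monic}}\chi_g(f)\,T^{\deg f}$; if one instead counts affine points of $C$ one must track the contributions of the zeros of $g$ and of the places at infinity, which depend on whether $m$ divides $d$ (when infinity is unramified the conductor drops and $\deg L$ can be smaller than $d-1$, which only helps). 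Finally, deferring the eigenvalue bound $|\omega_i|=\sqrt q$ to a citation of Weil (or to the Stepanov--Bombieri method for an elementary route) is not a gap in this context: the statement being proved \emph{is} Weil's theorem, and the paper itself handles it exactly the same way, by citation.
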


Let $q$ be an odd prime power and let $\chi$
be the quadratic character on $\F_q^*$,
that is, 
$\chi(u)=1$ if $u$ is a non-zero \sq\ in $\Fq$
and
$\chi(u)=-1$ if $u$ is a \ns\ in $\Fq$.
We extend $\chi$ on $\Fq$ by defining $\chi(0)=0$.

\begin{lemma}
\label{lem-kappa}
Let $f_1,\ldots,f_n$ be polynomials in $\Fq[x]$ and let
$1\le t< n$. Let 
$\eps_i=1$ for $1\le i\le t$
and
$\eps_i=-1$ for $t+1\le i\le n$.
Define $\kappa:\F_{q}\rightarrow \Q$ by
\begin{equation}
\label{eq-kappa}
\kappa(x)=
\frac{1}{2^n}\prod_{i=1}^n(1+\eps_i\chi(f_i(x)))
\end{equation}
and let $S=\sum_{x\in\Fq} \kappa(x)$.
The number of $a\in\Fq$ such that
$f_i(a)$ is \sq\ for $1\le i\le t$
and 
$f_i(a)$ is \ns\ for $t+1\le i\le n$
is at least $S-\sum_{i=1}^n \deg f_i$.
\end{lemma}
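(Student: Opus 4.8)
The plan is to interpret $\kappa$ as a weighted indicator function of the set being counted, and then to control the gap between this weighted count and the true count by bounding the contribution of the zeros of the $f_i$.

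First I would rewrite the definition as $\kappa(x)=\prod_{i=1}^n\frac{1}{2}(1+\eps_i\chi(f_i(x)))$ and examine a single factor $\frac{1}{2}(1+\eps_i\chi(u))$. Since $\chi(u)\in\{-1,0,1\}$, this factor equals $1$ precisely when $u$ has the parity prescribed by $\eps_i$ (a non-zero square when $\eps_i=1$, a non-square when $\eps_i=-1$), equals $0$ when $u$ has the opposite parity, and equals $\frac{1}{2}$ when $u=0$. This is the elementary observation that drives the whole argument.

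Next I would classify $x\in\Fq$ according to the values $f_i(x)$. Let $N$ denote the number of $a\in\Fq$ for which every $f_i(a)$ is non-zero and carries the prescribed parity; this is exactly the quantity the lemma counts, with ``square'' understood as ``non-zero square'' to match $\chi$. For each such $a$ every factor equals $1$, so $\kappa(a)=1$. If all $f_i(x)$ are non-zero but at least one parity is wrong, then some factor vanishes and $\kappa(x)=0$. The only remaining possibility is that $f_i(x)=0$ for at least one index $i$; then at least one factor equals $\frac{1}{2}$ while the others lie in $\{0,\frac{1}{2},1\}$, so $0\le\kappa(x)\le\frac{1}{2}$.

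Summing over $\Fq$ then yields $S=N+E$, where $E=\sum\kappa(a)$ is the contribution of those $a$ with $f_i(a)=0$ for some $i$. Since $0\le\kappa(a)\le\frac{1}{2}$ on this set, and the set is precisely the zero set of $\prod_{i=1}^n f_i$, whose cardinality is at most $\deg\big(\prod_{i=1}^n f_i\big)=\sum_{i=1}^n\deg f_i$, I obtain $0\le E\le\frac{1}{2}\sum_{i=1}^n\deg f_i$. Hence $N=S-E\ge S-\sum_{i=1}^n\deg f_i$, as claimed, in fact with the slightly sharper error $\frac{1}{2}\sum_i\deg f_i$. There is no genuine obstacle here: the argument is an exact evaluation of $\kappa$ on each fibre followed by a root count. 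The only point needing care is the bookkeeping for the zeros of the $f_i$, which is exactly what produces the correction term $\sum_i\deg f_i$ and which relies on each $f_i$ being a non-zero polynomial so that this root count is finite.
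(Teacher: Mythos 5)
Your proof is correct and follows essentially the same route as the paper's: evaluate $\kappa$ exactly on the points where all $f_i$ are non-zero (giving $1$ for the desired parities and $0$ otherwise) and bound the leftover contribution by the number of roots of the $f_i$, which is at most $\sum_i \deg f_i$. Your only addition is the observation that $\kappa\le\frac12$ at those roots, which sharpens the error term to $\frac12\sum_i\deg f_i$; the paper simply uses $\kappa\le 1$.
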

\begin{proof}
The result follows from three simple observations:
If $a\in\Fq$ is such that $f_i(a)\neq 0$ for $1\le i \le n$
and all $f_i(a)$ have the desired parities,
then $\kappa(a)=1$.
If $a\in\Fq$ is such that $f_i(a)\neq 0$ for $1\le i \le n$
and at least one  $f_i(a)$ has the wrong parity,
then $\kappa(a)=0$.
For each $a\in\Fq$ we have $\kappa(a)\le 1$,
and the total number of roots of the polynomials $f_i$ in $\Fq$
is at most $\sum_{i=1}^n \deg f_i$.
\end{proof}


\begin{lemma}
\label{lem-24mil}
(i) For each odd prime power $q$ such that $q\le 7^3$
and\break 
$q\neq 3,5,7,11$ there exist $a,b\in\Fq$
such that $L_{a,b}$ is a \maq.\\
(ii) For each odd prime power $q$ such that $7^3 < q < 2.4\cdot 10^6$
and\break 
$q\neq 3^7,3^9,3^{11},3^{13}$ there exists $a\in\Fq$
such that $L_{a,a^2}$ is a \maq.
\end{lemma}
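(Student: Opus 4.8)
The plan is to establish this lemma by a finite computer search rather than by the Weil bound, since the stated ranges are concrete and bounded. The reason the two parts are separated is precisely that for small $q$ the asymptotic Weil estimate of the following section need not yield a positive count, so existence must be verified directly. For each prime power $q$ in the relevant range I would enumerate candidate values of $a$ (or pairs $(a,b)$ in part~(i)) and test the defining conditions. The outer structure is therefore a loop over $q$, and for each $q$ an inner loop over field elements.

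For part~(ii) the search is cleanest because Theorems~\ref{thm-q14} and \ref{thm-q34} reduce the verification to checking a fixed list of squareness conditions on rational expressions in $a$. The plan is: first precompute the set of nonzero squares in $\Fq$ (e.g.\ by squaring all elements, or by testing $u^{(q-1)/2}=1$); then loop over $a\in\Fq\setminus\{-1,0,1\}$, and for each $a$ evaluate the eight or so polynomial/rational conditions and compare their parities against the required pattern dictated by whether $q\equiv 1$ or $q\equiv 3\pmod 4$. As soon as one admissible $a$ is found for a given $q$, record success and move to the next $q$. Since by Theorem~\ref{thm-q14} or Theorem~\ref{thm-q34} any admissible $a$ yields a \maq\ $L_{a,a^2}$, a single witness per $q$ suffices; one need not verify the maximal nonassociativity property from scratch.

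For part~(i) the situation is more delicate, since here we allow general $b$ (not necessarily $b=a^2$) and the range excludes only the genuinely small orders $3,5,7,11$. Because Theorems~\ref{thm-q14} and \ref{thm-q34} are stated only for $b=a^2$, the broader parameter $b$ is needed to cover the few orders (such as $q=9,13,\dots$) where no admissible $a$ exists for the $b=a^2$ family. Here I would fall back on a direct computation of $a(L_{a,b})$: construct the table $L_{a,b}$, confirm it is a Latin square, and count associative triples, reporting success when the count equals $q$. To keep this tractable one should exploit the orbit structure guaranteed by Lemma~\ref{Lab-G}: the group $G_q^{(2)}$ acts so that it suffices to test associative triples $(t,u,v)$ with $(t,u)$ ranging over orbit representatives, namely $(0,0)$, $(0,1)$, and $(0,\eta)$, thereby cutting the triple count by a factor of order $q^2$.

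The main obstacle is simply the scale of the computation: part~(ii) ranges up to $q<2.4\cdot 10^6$, so the search must be engineered to run efficiently, using fast modular or finite-field arithmetic and the quadratic-character test $u^{(q-1)/2}$, and the four excluded exceptional orders $3^7,3^9,3^{11},3^{13}$ must be identified as genuinely admitting no witness $a$ with $b=a^2$ (these are presumably exactly the cases where the search fails and must instead be handled, if at all, by part~(i)-style methods or noted as exceptions). The proof itself is then a verification claim: the computation has been carried out, its source is available, and for every $q$ outside the listed exceptions a valid witness was found. The delicate points to get right are correct handling of prime-power (non-prime) fields via an explicit irreducible polynomial, and ensuring that the squareness conventions (with $\chi(0)=0$, i.e.\ $0$ is neither square nor non-square) match those used in Definition~\ref{def-Lab} and the theorems above.
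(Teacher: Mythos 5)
Your overall strategy --- a finite computer search, using Theorems~\ref{thm-q14} and \ref{thm-q34} as certificates where possible and direct triple-counting (with the orbit reduction from Lemma~\ref{Lab-G}) otherwise --- is exactly the paper's approach: its entire proof of this lemma is the statement that the required quasigroups were found by a Magma computation taking about 40 hours of CPU time. Your write-up is in that sense a faithful, and considerably more detailed, account of the intended argument.

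However, your plan for part~(ii) has a concrete flaw in characteristic~$3$. In $\F_{3^k}$ one has $a^2+a+1=(a-1)^2$ and $a^2-a+1=(a+1)^2$, so both quantities are nonzero squares for every admissible $a$ (they vanish only at the excluded values $a=\pm 1$). Consequently the hypotheses of Theorem~\ref{thm-q14} (which require $a^2+a+1$ to be a \ns) and of Theorem~\ref{thm-q34} (which require both $a^2-a+1$ and $a^2+a+1$ to be \ns s) are unsatisfiable for every power of~$3$, and your conditions-based search returns nothing for any such $q$. But part~(ii) excludes only the odd powers $3^7,3^9,3^{11},3^{13}$; it asserts the existence of a witness $a$ with $L_{a,a^2}$ a \maq\ for $q=3^6,3^8,3^{10},3^{12}$, all of which lie in the stated range. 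As written, your procedure would wrongly classify these four orders as additional exceptions. The repair is to extend to these orders the fallback you describe only for part~(i): verify maximal nonassociativity of $L_{a,a^2}$ directly, i.e.\ confirm the Latin property and check that no triple of the form $(0,0,z)$ with $z\neq 0$, $(0,1,z)$, or $(0,\eta,z)$ is associative, which is $O(q)$ work per candidate $a$ by Lemma~\ref{Lab-G}. This is consistent with the lemma, because the conditions of Theorems~\ref{thm-q14} and \ref{thm-q34} are sufficient but not necessary for $L_{a,a^2}$ to be a \maq. With that amendment your proof goes through and coincides with the paper's.
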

\begin{proof}
For each $q$ we found the required \maq\ using the computational
algebra system Magma \cite{mag}.
The total computation time was about 40~hours on a single CPU.
\end{proof}

The following result is well known and it is proved
by a direct product construction.

\begin{lemma}
\label{lem-dir-product}
Suppose that there exist \maq s of order $r$ and $s$,
then there exists a \maq\ of order $rs$.
\end{lemma}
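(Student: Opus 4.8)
The final statement is Lemma~\ref{lem-dir-product}, which asserts that maximal nonassociativity is multiplicative: if \maqs\ of orders $r$ and $s$ exist, then one of order $rs$ exists. The natural approach is the direct product construction. Let me sketch this.

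Given \maqs\ $Q_1(*_1)$ of order $r$ and $Q_2(*_2)$ of order $s$, form the Cartesian product $Q = Q_1 \times Q_2$ with the componentwise operation
\[
(x_1,x_2)*(y_1,y_2) = (x_1 *_1 y_1,\ x_2 *_2 y_2).
\]
First I would verify that $(Q,*)$ is a quasigroup of order $rs$: the Latin property in each coordinate lifts to the product, since solving $(a_1,a_2)*(u_1,u_2)=(b_1,b_2)$ amounts to solving $a_1 *_1 u_1 = b_1$ and $a_2 *_2 u_2 = b_2$ independently, each of which has a unique solution, and similarly on the left.

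The key step is to count associative triples. A triple $((x_1,x_2),(y_1,y_2),(z_1,z_2))$ is associative in $Q$ precisely when it is associative in each coordinate, because associativity is checked componentwise and a pair of tuples is equal iff both coordinates agree. Hence the associative triples of $Q$ are exactly the products of an associative triple of $Q_1$ with an associative triple of $Q_2$, giving
\[
a(Q) = a(Q_1)\cdot a(Q_2).
\]
Since $Q_1$ and $Q_2$ are maximally nonassociative, $a(Q_1)=r$ and $a(Q_2)=s$, so $a(Q)=rs=|Q|$. Because every quasigroup satisfies $a(Q)\ge |Q|$, attaining equality shows $Q$ is maximally nonassociative.

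The only mild subtlety, and the place to be careful, is the equivalence ``associative in the product iff associative in both factors.'' This follows immediately from the componentwise definition of $*$ together with the fact that equality of tuples is equality in each coordinate; there is no interaction between the factors, so no cross terms arise. I do not anticipate a genuine obstacle here, as this is a routine verification—the lemma is standard and the product construction is clean. The entire argument is a short direct computation with no appeal to the finite-field machinery of the preceding sections.
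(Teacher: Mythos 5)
Your proof is correct and is exactly the approach the paper intends: the paper gives no details, stating only that the lemma ``is well known and it is proved by a direct product construction,'' and your argument is precisely that construction, with the componentwise quasigroup check and the identity $a(Q_1\times Q_2)=a(Q_1)\,a(Q_2)$ spelled out correctly.
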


Theorems \ref{thm-q1mod4} and \ref{thm-q3mod4} 
are
the main results of this section.

\begin{theorem}
\label{thm-q1mod4}
Let $q\equiv 1\pmod{4}$ be a prime power, $q\ge 9$.
There exists a \maq\ of order $q$.
\end{theorem}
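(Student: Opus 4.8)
The plan is to split the argument at the threshold $2.4\cdot 10^6$ used in Lemma~\ref{lem-24mil}. For $9\le q<2.4\cdot 10^6$ the conclusion is immediate: such a $q$ is covered by part~(i) or part~(ii) of Lemma~\ref{lem-24mil}, and none of the excluded orders $3,5,7,11,3^7,3^9,3^{11},3^{13}$ is simultaneously $\ge 9$ and congruent to $1$ modulo~$4$. So the real work is to handle $q\ge 2.4\cdot 10^6$, and for this I would exhibit an $a\in\Fq$ satisfying the hypotheses of Theorem~\ref{thm-q14}; then $L_{a,a^2}$ is the required \maq. Those hypotheses amount to prescribing the parities of the eight polynomials $x,\ x+1,\ x^3-x-1$ (squares) and $x-1,\ x^2+1,\ x^2-x-1,\ x^2+x+1,\ x^2+x-1$ (non-squares), so Lemma~\ref{lem-kappa} applies verbatim with $n=8$, $t=3$, and $\sum_i\deg f_i=14$.

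Next I would estimate the quantity $S=\sum_{x\in\Fq}\kappa(x)$ of Lemma~\ref{lem-kappa}. Expanding the product~(\ref{eq-kappa}) and using multiplicativity of $\chi$ gives
\[
2^{8}S=q+\sum_{\emptyset\neq T\subseteq\{1,\dots,8\}}\Big(\prod_{i\in T}\eps_i\Big)\sum_{x\in\Fq}\chi\Big(\prod_{i\in T}f_i(x)\Big),
\]
the term $T=\emptyset$ contributing $q$. Applying the Weil bound (Theorem~\ref{thm-weil}) to each of the $2^8-1=255$ remaining character sums, and using $\sum_{\emptyset\neq T}\big(\deg\prod_{i\in T}f_i-1\big)=14\cdot 2^{7}-255=1537$, I would obtain $2^{8}S\ge q-1537\sqrt q$. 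By Lemma~\ref{lem-kappa} the number of admissible $a$ is at least $S-14\ge 2^{-8}(q-1537\sqrt q)-14$, which is positive as soon as $q-1537\sqrt q>3584$, that is, for $q$ greater than about $2.37\cdot 10^6$. Since this threshold lies below $2.4\cdot 10^6$, it dovetails with the computational range of Lemma~\ref{lem-24mil} and leaves no gap.

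The step I expect to be the main obstacle is verifying the hypothesis of the Weil bound, namely that for every nonempty $T$ the product $\prod_{i\in T}f_i$ is not a square in $\overline{\Fq}[x]$. This holds whenever the eight polynomials are squarefree and pairwise coprime, for then every such product is squarefree. I would check that all pairwise resultants of the $f_i$ are divisible only by the primes $2,3,5$, and that the discriminants force a repeated factor only in characteristic $3$ (where $x^2+x+1=(x-1)^2$) and characteristic $5$ (where $x^2-x-1$ and $x^2+x-1$ become squares); the cubic $x^3-x-1$ fails to be squarefree only in characteristic~$23$, but being of odd degree it is never a square and its simple root is not a root of any other $f_i$, so products through it remain non-square. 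Hence the Weil estimate, and with it the construction $L_{a,a^2}$, is valid for every characteristic $p\neq 3,5$, but genuinely breaks down for $p=3$ and $p=5$, where the prescribed non-squares are forced to be squares.

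It remains to dispose of these two characteristics by a different route, for which I would invoke the direct-product Lemma~\ref{lem-dir-product} together with the small \maqs\ supplied by Lemma~\ref{lem-24mil}. If $\operatorname{char}\Fq=3$ then $q\equiv 1\pmod 4$ forces $q=3^{2m}=9^{m}$, and iterating the direct product of the order-$9$ \maq\ yields a \maq\ of order $9^{m}=q$. If $\operatorname{char}\Fq=5$ then $q=5^{k}$ with $k\ge 2$; writing $k=2s+3t$ with $s,t\ge 0$ (possible for every $k\ge 2$) and taking direct products of the \maqs\ of orders $25$ and $125$ gives a \maq\ of order $25^{s}125^{t}=5^{k}=q$. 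Combining the four cases---direct products in characteristics $3$ and $5$, Lemma~\ref{lem-24mil} for the remaining $q<2.4\cdot 10^6$, and the Weil estimate for the remaining $q\ge 2.4\cdot 10^6$---would cover every prime power $q\equiv 1\pmod 4$ with $q\ge 9$.
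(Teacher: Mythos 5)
Your proposal is correct and follows essentially the same architecture as the paper's proof: Theorem~\ref{thm-q14} combined with Lemma~\ref{lem-kappa} and the Weil bound for large $q$ (with the identical constant $1537$ and essentially the same threshold), Lemma~\ref{lem-24mil} for $9\le q<2.4\cdot 10^6$, and direct products (Lemma~\ref{lem-dir-product}) of small \maqs\ for the characteristics where the character-sum argument fails. The one place you genuinely diverge is characteristic~$23$. The paper computes discriminants of the even-degree products $\prod_{i\in I}f_i$ over $\Z$, flags $3$, $5$ and $23$ as the characteristics in which the Weil hypothesis might fail, and then handles all three by direct products, using the order-$23$ \maq\ from Lemma~\ref{lem-24mil} to cover $23^e$ for every $e\ge 1$. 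You instead sharpen the applicability check: in characteristic $23$ only the cubic $f_3=x^3-x-1$ loses squarefreeness, and since $x^3-x-1=(x-3)(x-10)^2$ over $\F_{23}$ with the simple root $3$ not a root of any other $f_i$ (one checks $f_i(3)\not\equiv 0 \pmod{23}$ for $i\neq 3$, and the pairwise resultants are divisible only by $2,3,5$), every product containing $f_3$ has a root of odd multiplicity and hence is not a square in $\overline{\F}_q[x]$; products avoiding $f_3$ are squarefree outright. This is a valid refinement that eliminates the characteristic-$23$ special case entirely, leaving only $3$ and $5$, which you dispose of exactly as the paper does in spirit ($9^m$ in characteristic $3$, noting $q\equiv 1\pmod 4$ forces an even exponent, and $25^s\cdot 125^t$ in characteristic $5$, versus the paper's uniform use of orders $p^2$ and $p^3$). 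Your observation that none of the exceptions in Lemma~\ref{lem-24mil} is simultaneously $\ge 9$ and $\equiv 1\pmod 4$ also makes the small-$q$ case slightly cleaner than the paper's remark that characteristic $3$ is handled separately anyway. In short: same method, with a small sharpening of the Weil-applicability analysis that removes one special case.
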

\begin{proof}
The bulk of the proof is based on the combination of
Theorem~\ref{thm-q14}, Lemma~\ref{lem-kappa} and Theorem~\ref{thm-weil}.
Some special cases will remain to be handled by other methods,
and this will be done at the end of the proof.

To assume the notation of Lemma~\ref{lem-kappa},
let $f_1(x)=x$, $f_2(x)=x+1$, $f_3(x)=x^3-x-1$,
$f_4(x)=x-1$, $f_5(x)=x^2+1$, $f_6(x)=x^2-x-1$, $f_7(x)=x^2+x+1$,
$f_8(x)=x^2+x-1$, 
and let $t=3$,
that is,
$\eps_1=\eps_2=\eps_3=1$
and
$\eps_4=\eps_5=\eps_6=\eps_7=\eps_8=-1$.

Note that the quadratic character $\chi$ remains multiplicative
after extension at~0: we have $\chi(uv)=\chi(u)\chi(v)$
for all $u,v\in\Fq$ (not just on $\F_q^*$).
By expanding the right-hand side of (\ref{eq-kappa})
and rearranging the sums we get
\begin{eqnarray*}
\kappa(x)
-
\frac{1}{2^8}  
&=&
\frac{1}{2^8}  \sum_{\emptyset\neq I\subseteq\{1,\ldots,8\}}
                    \prod_{i\in I}(\eps_i\chi(f_i(x)))
\\
&=&
\frac{1}{2^8}  \sum_{\emptyset\neq I\subseteq\{1,\ldots,8\}}
                    \left( \prod_{i\in I} \eps_i \right)
                     \chi\left(\prod_{i\in I} f_i(x)\right).
\end{eqnarray*}
After summing over all $x\in\Fq$ and denoting $S=\sum_{x\in\Fq}\kappa(x)$
we get
\begin{equation}
\label{eq-S-char-sums}
S-\frac{q}{2^8}=
\frac{1}{2^8}  \sum_{\emptyset\neq I\subseteq\{1,\ldots,8\}}
                    \left( \prod_{i\in I} \eps_i \right)
                     \left(\sum_{x\in\Fq}\chi\left(\prod_{i\in I} f_i(x)\right)\right).
\end{equation}
We would like to apply the Weil bound 
(Theorem~\ref{thm-weil}) on the character sums
$\sum_{x\in\Fq}\chi\left(\prod_{i\in I} f_i(x)\right)$.
We exclude the cases where $\prod_{i\in I} f_i(x)$
may be a perfect square over some algebraic extension of~$\Fq$,
for some~$I$,
as follows. We know that the discriminant of a 
non-constant polynomial
which is a perfect square is equal to~$0$.
Consider a fixed index set~$I$.
We consider the polynomials
$f_i$ over $\Z$ 
and we compute the discriminant of $\prod_{i\in I} f_i(x)$,
let us say the discriminant is the integer~$D$.
Then the discriminant of $\prod_{i\in I} f_i(x)$
with $f_i$ over $\Fq$ equals $D\bmod p$, where $p$
is the characteristic of $\Fq$. In other words,
the prime factors of~$D$ are the characteristics
in which the Weil bound may not be applicable,
and we will deal with those characteristics separately at the end of the proof.

By computing the discriminant of $\prod_{i\in I} f_i(x)$ over $\Z$
for all non-empty subsets $I$ of $\{1,\ldots,8\}$
such that the degree of $\prod_{i\in I} f_i(x)$ is even
(which is a necessary condition for a polynomial to be a perfect square)
we find that the only primes that divide any of these discriminants
are $2$, $3$, $5$ and $23$. Only the last three are odd and relevant
to this theorem.

First assume that the characteristic of~$\Fq$ is different from 
$3$, $5$ and $23$.
Then the Weil bound applies to each character sum
on the right-hand side of (\ref{eq-S-char-sums}).
Applying Theorem~\ref{thm-weil} to each sum
$\sum_{x\in\Fq}\chi\left(\prod_{i\in I} f_i(x)\right)$ and
adding over all non-empty subsets of $\{1,\ldots,8\}$
we get
\begin{equation}
\label{eq-S-bound-numeric}
S\ge
\frac{1}{2^8}(q-1537\sqrt{q})
\end{equation}
where $1537=2^7(1+1+1+2+2+2+2+3)-2^8+1$
is the sum of $\deg(\prod_{i\in I} f_i(x))-1$ over all 
$\emptyset\neq I\subseteq\{1,\ldots,8\}$.
It is verified easily that for $q>2.4\cdot 10^6$
we have $\frac{1}{2^8}(q-1537\sqrt{q})>14=\sum_{i=1}^8 \deg(f_i(x))$,
hence for such $q$ it follows
from Theorem~\ref{thm-q14} and
Lemma~\ref{lem-kappa} that there exists $a\in\Fq$
such that $L_{a,a^2}$ is \maq.
For odd prime powers $9\le q<2.4\cdot 10^6$ with $q\equiv 1\pmod{4}$
the existence of \maq\ of order $q$ follows from Lemma~\ref{lem-24mil},
with the exceptions stated there. Since we have to deal with characteristic~$3$
separately anyway, the exceptions of Lemma~\ref{lem-24mil} do not bother us.

We will now deal with the characteristics $3$, $5$ and $23$.
By Lemma~\ref{lem-24mil}, \maqs\ of orders $p^2$ and $p^3$
exist for $p=3$ and $p=5$. Then the existence of \maq\ of orders $p^e$ 
for $e\ge 2$ follows from Lemma~\ref{lem-dir-product}.
Again by Lemma~\ref{lem-24mil}, \maq\ of order $23$ exists,
hence  there exist \maqs\ of order $23^e$ for all $e\ge 1$,
again by  Lemma~\ref{lem-dir-product}.
\end{proof}

\begin{theorem}
\label{thm-q3mod4}
Let $q\equiv 3\pmod{4}$ be a prime power, $q\ge 19$.
There exists a \maq\ of order $q$.
\end{theorem}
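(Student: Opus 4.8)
The plan is to follow the proof of Theorem~\ref{thm-q1mod4} almost verbatim, replacing Theorem~\ref{thm-q14} by Theorem~\ref{thm-q34} to account for the fact that $-1$ is now a \ns. To assume the notation of Lemma~\ref{lem-kappa}, I would read the eight polynomials directly off the hypotheses of Theorem~\ref{thm-q34}: let $f_1(x)=x$, $f_2(x)=x+1$, $f_3(x)=x-1$, $f_4(x)=x^2+1$, $f_5(x)=x^3+x^2-1$, $f_6(x)=x^2-x+1$, $f_7(x)=x^2+x+1$, $f_8(x)=x^2+x-1$, and take $t=5$, so that $\eps_1=\dots=\eps_5=1$ and $\eps_6=\eps_7=\eps_8=-1$. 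By Lemma~\ref{lem-kappa} any $a\in\Fq$ at which all eight polynomials realise these prescribed parities satisfies the hypotheses of Theorem~\ref{thm-q34}, and hence yields a \maq\ $L_{a,a^2}$ of order~$q$.

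The character-sum computation that produces (\ref{eq-S-char-sums}) uses only the multiplicativity of $\chi$ and is therefore unchanged. The decisive observation is that the new polynomials carry the same multiset of degrees $\{1,1,1,2,2,2,2,3\}$ as before, so $\sum_{i=1}^8\deg f_i=14$ and the constant $2^7\cdot 14-2^8+1=1537$ reappears verbatim. Consequently the Weil bound (Theorem~\ref{thm-weil}) again gives $S\ge\frac{1}{2^8}(q-1537\sqrt q)$ in every characteristic in which no product $\prod_{i\in I}f_i$ degenerates to a perfect square, and this exceeds $14=\sum_{i=1}^8\deg f_i$ once $q>2.4\cdot 10^6$; Lemma~\ref{lem-kappa} then guarantees a suitable~$a$.

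The one genuinely new computation is the list of bad characteristics, since the polynomials themselves have changed (for instance $x^3+x^2-1$ and $x^2-x+1$ now appear in place of $x^3-x-1$ and $x^2-x-1$). I would compute over $\Z$ the discriminant of $\prod_{i\in I}f_i$ for each nonempty $I$ of even total degree and record the odd primes dividing any of them; these are exactly the characteristics in which the Weil bound might fail. The factor $f_7(x)=x^2+x+1$ alone already has discriminant $-3$, so I expect~$3$ to be among them, together with finitely many other small primes.

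It then remains to handle the range $19\le q<2.4\cdot 10^6$ and each bad characteristic, which I would do through Lemma~\ref{lem-24mil} and the direct product construction of Lemma~\ref{lem-dir-product}, exactly as for Theorem~\ref{thm-q1mod4}. I expect the only delicate point to be characteristic~$3$: the relevant prime powers are $q=3^e$ with $e$ odd (so $q\equiv 3\pmod 4$) and $e\ge 3$ (so $q\ge 19$), and these include precisely the orders $3^7,3^9,3^{11},3^{13}$ that Lemma~\ref{lem-24mil}(ii) leaves open. Since \maqs\ of orders $9=3^2$ and $27=3^3$ exist by Lemma~\ref{lem-24mil}, Lemma~\ref{lem-dir-product} produces \maqs\ of every order $3^e$ with $e\ge 2$, covering all such~$q$; for any other bad prime~$p$ the corresponding powers $p^e\ge 19$ are covered either directly by Lemma~\ref{lem-24mil} or, for $e\ge 2$, by direct products of the examples of orders $p^2$ and $p^3$. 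This would complete the proof.
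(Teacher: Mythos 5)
Your proposal is correct and takes essentially the same route as the paper: the same eight polynomials read off the hypotheses of Theorem~\ref{thm-q34}, the same observation that the degree multiset $\{1,1,1,2,2,2,2,3\}$ is unchanged so the constant $1537$ and the threshold $q>2.4\cdot 10^6$ carry over, and the same use of Lemmas~\ref{lem-24mil} and~\ref{lem-dir-product} for small $q$ and bad characteristics. The only difference is that you leave the discriminant computation unexecuted; the paper reports its outcome (the relevant primes are $2,3,5,7,23$, so only characteristic~$7$ is new compared with Theorem~\ref{thm-q1mod4}), and your fallback of taking direct products of the \maqs\ of orders $p^2$ and $p^3$ supplied by Lemma~\ref{lem-24mil} disposes of it exactly as the paper does.
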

\begin{proof}
The structure of the proof is analogous to the proof of Theorem~\ref{thm-q1mod4}.
Instead of Theorem~\ref{thm-q14} we now use Theorem~\ref{thm-q34}.
The multiset of degrees of the polynomials $f_i$ is the same
as in the case $q\equiv 1\pmod{4}$, hence the Weil bound produces
the same results. Again we need to consider the special characteristics
in which the Weil bound may not apply.
Again 
by computing the discriminants of products $\prod_{i\in I} f_i(x)$
of even degree over $\Z$ we find that
the only primes that divide any of these discriminants
are $2$, $3$, $5$, $7$ and $23$.
Only the characteristic~$7$ is new and needs to be considered.
By Lemma~\ref{lem-24mil}, \maqs\ of orders $7^2$ and $7^3$
exist hence \maq\ of order $7^e$ exists for each $e\ge 2$.
\end{proof}

\section{Main Result}

For a positive integer $n$ and a prime $p$ let $\nu_p(n)$,
the {\em $p$-adic valuation} of $n$, be defined
as the largest integer $e$ such that $p^e$ divides~$n$.

\begin{theorem}
\label{thm-main}
Let $n$ be a positive integer such that $\nu_p(n)\neq 1$
for $p=3,5,7,11$
and $\nu_2(n)$ is even and different from $2$, $4$.
There exists a \maq\ of order~$n$.
\end{theorem}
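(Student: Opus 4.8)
The plan is to reduce the statement for arbitrary $n$ to the prime-power existence results already established, namely Theorems~\ref{thm-q1mod4} and~\ref{thm-q3mod4}, together with the direct product Lemma~\ref{lem-dir-product} and the small-order computations of Lemma~\ref{lem-24mil}. First I would factor $n$ into its prime-power components $n=\prod_j p_j^{e_j}$. By Lemma~\ref{lem-dir-product} applied repeatedly, it suffices to exhibit a \maq\ of order $p^e$ for each prime-power factor $p^e$ appearing in $n$, since the product of quasigroups on these factors yields a \maq\ of order $n$. Thus the entire theorem follows once every prime power $p^e$ allowed by the hypotheses is shown to support a \maq.

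Next I would organize the prime powers by residue class and characteristic. If $p^e\equiv 1\pmod 4$ and $p^e\ge 9$, the existence follows directly from Theorem~\ref{thm-q1mod4}; if $p^e\equiv 3\pmod 4$ and $p^e\ge 19$, it follows from Theorem~\ref{thm-q3mod4}. The remaining prime powers to address are the small ones excluded by the lower bounds ($q\in\{3,5,7,11,13,3^2,\ldots\}$ up to the thresholds) and, crucially, the even part $2^{\nu_2(n)}$, since the constructions of the previous section only apply to odd~$q$. For the even part I would note that the hypothesis forces $\nu_2(n)$ to be even and not equal to $2$ or $4$, so the power of $2$ dividing $n$ is of the form $2^{2k}=(2^k)^2$ with $2^k\ge 8$; the existence of a \maq\ of such orders is exactly what the earlier Theorem~\cite[Corollary~5.8]{DL} guarantees (taking $m$ a power of $2$ with the appropriate structure, giving order $m^2$). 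This is why the even valuation is restricted in precisely this way.

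The main work is then a careful bookkeeping of which small prime powers must be handled and verifying the hypotheses were chosen to exclude exactly the problematic ones. For odd primes $p$, the condition $\nu_p(n)\neq 1$ for $p=3,5,7,11$ means these primes, if they divide $n$ at all, do so with exponent at least~$2$; combined with Lemma~\ref{lem-24mil} (which provides \maqs\ of orders $p^2$ and $p^3$ for the relevant small $p$) and Lemma~\ref{lem-dir-product}, every admissible power $p^e$ with $e\ge 2$ is covered by writing $e=2s+3t$ for suitable nonnegative integers $s,t$ and taking a direct product. For primes $p\ge 13$ with $p\equiv 1\pmod4$, or $p\ge 19$ with $p\equiv 3\pmod 4$, the single-prime case $e=1$ is already handled by Theorems~\ref{thm-q1mod4}/\ref{thm-q3mod4}; the forbidden $\nu_p(n)=1$ restriction is imposed only for those small primes ($3,5,7,11$) whose first power is too small for the Weil-bound constructions to reach.

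The step I expect to be the main obstacle is the verification that, for each odd prime $p$ appearing with allowed exponent, the set of reachable exponents via $e=2s+3t$ together with Theorems~\ref{thm-q1mod4}/\ref{thm-q3mod4} genuinely covers all admissible $e$ without gaps --- in particular confirming that $e=1$ is permitted precisely when a direct single-prime construction exists, and that every $e\ge 2$ is expressible through the available base cases of orders $p^2$ and $p^3$. Once this combinatorial covering is checked prime by prime and the even part is dispatched via \cite[Corollary~5.8]{DL}, assembling the global \maq\ of order $n$ by iterating Lemma~\ref{lem-dir-product} across all prime-power factors completes the proof.
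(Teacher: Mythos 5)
Your handling of the odd part of $n$ is sound and matches the paper: factor $n$ into prime powers, use Theorems~\ref{thm-q1mod4} and~\ref{thm-q3mod4} for the odd prime powers (the hypothesis $\nu_p(n)\neq 1$ for $p=3,5,7,11$ excludes exactly the odd prime powers below the thresholds, and every $p^e$ with $e\ge 2$ for these small primes is covered, either directly by the theorems or via $p^2,p^3$ and Lemma~\ref{lem-dir-product}), then assemble with Lemma~\ref{lem-dir-product}.

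However, your treatment of the even part has a genuine gap. You claim that every order $2^{2k}=(2^k)^2$ with $2^k\ge 8$ is guaranteed by \cite[Corollary~5.8]{DL}, but that corollary requires $m=2^{3k}r$ with $r$ odd; for $m$ a pure power of two this forces $\nu_2(m)\equiv 0\pmod 3$, so the corollary yields only the orders $2^{6k}$. In particular it does \emph{not} give orders $2^8=(2^4)^2$ or $2^{10}=(2^5)^2$, since $\nu_2(2^4)=4$ and $\nu_2(2^5)=5$ are not multiples of $3$. Direct products of copies of the order-$2^6$ example reach only exponents divisible by $6$, so all exponents in $\{8,10,14,16,20,22,\dots\}$ remain uncovered by your argument; for instance $n=2^8$ (or $n=2^8\cdot 13$) satisfies the hypotheses of the theorem but is not reached. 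This is precisely the point where the paper does extra work: it cites \cite[Lemma~5.7]{DL} only for order $2^6$, and then constructs \maqs\ of orders $2^8$ and $2^{10}$ by a \emph{new} argument --- prescribing the automorphism group of affine maps $x\mapsto\alpha x+\beta$ with $\alpha$ a non-zero \emph{cube} (the characteristic-two analogue of the square-based construction used for odd $q$). With building blocks of orders $2^6$, $2^8$, $2^{10}$, every even exponent $\ge 6$ is a sum of $6$'s, $8$'s and $10$'s, and Lemma~\ref{lem-dir-product} finishes the even part. Without some construction for $2^8$ and $2^{10}$ (or an equivalent substitute), your proof does not establish the theorem as stated.
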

\begin{proof}
Examples of order $2^6$ were constructed from Dickson nearfield
of that order \cite[Lemma~5.7]{DL}. Examples of orders $2^8$ 
and $2^{10}$ can be
obtained easily by prescribing
an automorphism group 
consisting of affine mappings $x\mapsto \alpha x+\beta$ 
where $\alpha$ is a non-zero cube.
The rest of the statement follows from Lemma~\ref{lem-dir-product}
and Theorems~\ref{thm-q1mod4} and \ref{thm-q3mod4}.
\end{proof}

\end{document}